\numberwithin{equation}{section}
\newtheorem{theorem}{Theorem}[section]
\newtheorem{lemma}[theorem]{Lemma}
\newtheorem{Remark}[theorem]{Remark}
\newtheorem{example}[theorem]{Example}
\newcommand{\Ba}[1]{\begin{array}{#1}}
\newcommand{\Ea}{\end{array}}
\newcommand{\Be}{\begin{equation}}
\newcommand{\Ee}{\end{equation}}
\newcommand{\Bea}{\begin{eqnarray}}
\newcommand{\Eea}{\end{eqnarray}}
\newcommand{\Beas}{\begin{eqnarray*}}
\newcommand{\Eeas}{\end{eqnarray*}}
\newcommand{\Benu}{\begin{enumerate}}
\newcommand{\Eenu}{\end{enumerate}}
\newcommand{\BR}{\begin{Remark} \em}
\newcommand{\ER}{\end{Remark}}
\newcommand{\BE}{\begin{example} \em}
\newcommand{\EE}{\end{example}}
\newcounter{reg}
\begin{document}
\allowdisplaybreaks

\title[Lebesgue-type inequalities for
quasi-greedy bases]{Lebesgue-type inequalities for quasi-greedy
bases}

\author{Eugenio Hern\'andez}

\address{Eugenio, Hern\'andez
\\
Departamento de Matem\'aticas
\\
Universidad Aut\'onoma de Madrid
\\
28049, Madrid, Spain}

\email{eugenio.hernandez@uam.es}

\begin{abstract}
We show that for quasi-greedy bases in real Banach spaces the error
of the thresholding greedy algorithm of order $N$ is bounded by the
best $N$-term error of approximation times a constant which depends
on the democracy functions and the quasi-greedy constant of the
basis.
\end{abstract}

\thanks{Research supported by Grant MTM2010-16518 (Spain).}

\date{\today}
\subjclass[2010]{41A65, 41A46, 41A17.}

\keywords{Lebesgue-type inequalities, thresholding greedy algorithm,
quasi-greedy bases, democracy functions. }

\maketitle

\section{Introduction}\label{secIntroduc}

Let $(\mathbb{B},\|.\|_{\mathbb{B}})$ be a real Banach space with a
countable seminormalized basis  $\mathcal{B} = \{e_{k}: k \in
\mathbb{N}\}.$ Let $\Sigma_N\,, N=1,2,3,\dots$ be the set of all
$y\in \mathbb B$ with at most $N$ non-null coefficients in the
unique basis representation. For $x\in \mathbb B$, the {\bf $N$-term
error of approximation} with respect to $\mathcal B$ is
$$
  \sigma_N(x) = \sigma_N(x;\mathcal B, \mathbb B) :=
  \inf_{y\in\Sigma_N} \|x-y\|_{\mathbb B}\,, \quad N=1,2,3, \dots
$$
Given $x=\sum_{k\in \mathbb N} a_k(x)\,e_k \in \mathbb B\,,$ let
$\pi$ denote any bijection of $\mathbb N$ such that
\begin{eqnarray} \label{greedy}
  |a_{\pi(k)}(x)| \geq |a_{\pi(k+1)}(x)|\, \quad \mbox{for all} \
  k\in \mathbb N\,.
\end{eqnarray}
The {\bf thresholding greedy algorithm of order $N$} (TGA) is
defined by
$$
  G_N(x) = G_N^\pi (x;\mathcal B, \mathbb B) := \sum_{k=1}^N a_{\pi(k)}(x)
  e_{\pi(k)}\,.
$$
It is not always true that $G_N(x) \to x$ (in $\mathbb B$) as $N \to
\infty\,.$ A basis $\mathcal B$ is called {\bf quasi-greedy} if
$G_N(x) \to x$ (in $\mathbb B$) as $N \to \infty$ for all $x\in
\mathbb B\,.$ It turns out that this is equivalent (see Theorem 1 in
\cite {Woj2000}) to the existence of some constant $C$ such that
\begin{eqnarray}\label{1.4}
  \sup_N \|G_N(x)\|_{\mathbb B} \leq C \| x \|_{\mathbb B} \quad
  \mbox{for all} \ x\in \mathbb B\,.
\end{eqnarray}
It is convenient to define the {\bf quasi-greedy constant} $K$ to be
the least constant such that
$$
  \|G_N(x)\|_{\mathbb B} \leq K \|x\|_{\mathbb B} \quad \mbox{and}
  \quad \|x- G_N(x)\|_{\mathbb B} \leq K \|x\|_{\mathbb B}\,, \quad
  x\in \mathbb B\,.
$$

Given a basis $\mathcal B$ in a Banach space $\mathbb B$ a {\bf
Lebesgue-type} inequality is an inequality of the form
$$
  \|x- G_N(x)\|_{\mathbb B} \leq C\,
  v(N)\,\sigma_N(x)\,,\quad   x\in \mathbb B\,,
$$
where $v(N)$ is a nondecreasing function of $N$. For a survey of
Lebesgue-type inequalites see \cite{Tem2008} and the references
given there.

The purpose of this note is to find Lebesgue-type inequalities for
quasi-greedy basis in a Banach space.

For a seminormalized collection $\mathcal B = \{u_k\}_{k\in \mathbb
N}$ in a Banach space $\mathbb B$ the following quantities are
defined:
$$
  h_r(N) = \sup_{|\Gamma|=N} \Big\| \sum_{k\in \Gamma}
  u_k\Big\|\,, \quad h_l(N) = \inf_{|\Gamma|=N} \Big \|\sum_{k\in \Gamma}
  u_k\Big\|\,,
$$
and
\begin{eqnarray} \label{1.8}
  \mu(N) = \sup_{1\leq k \leq N} \frac{h_r(k)}{h_l(k)}\,, \quad N=1,2,3,
  \dots\,.
\end{eqnarray}
 These functions are implicit in earlier works on $N$-term
approximation and explicitly defined in \cite{KT2004}. The function
$\mu(N)$ is defined in \cite{Woj2000}. The functions $h_r$ and $h_l$
are called right and left democracy functions of $\mathcal B$ (see
\cite{GHM2008} and \cite{GHN2011}).

\vspace{.3cm}

Our main result is the following:

\begin{theorem}\label{Th1}
  Let $\mathcal{B} = \{e_k\}_{k=1}^\infty$ be a quasi-greedy basis
in a real  Banach space $\mathbb{B}$, and let $K$ be the
quasi-greedy constant of $\mathcal{B}$. Then for all $N=1,2,3,
\dots$ and all $x\in \mathbb B$,
$$
  \|x- G_N(x)\|_{\mathbb B} \lesssim 8K^5 \Big(\sum_{k=1}^N
  \mu(k)\frac{1}{k} \Big) \sigma_N(x)\,.
$$
\end{theorem}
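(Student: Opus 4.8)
The plan is to fix $x$, a greedy ordering $\pi$, and the greedy set $\Lambda=\{\pi(1),\dots,\pi(N)\}$, and to choose a near-best $N$-term approximation $y$ with $\operatorname{supp}(y)=A$, $|A|\le N$, so that $r:=x-y$ satisfies $\|r\|_{\mathbb{B}}=\sigma_N(x)$. Since $\Lambda^c=(A\setminus\Lambda)\sqcup(A\cup\Lambda)^c$, I would split
\[
x-G_N(x)=\sum_{k\notin\Lambda}a_k e_k=\underbrace{\sum_{k\in A\setminus\Lambda}a_k e_k}_{=:S_1}+\underbrace{\sum_{k\in(A\cup\Lambda)^c}a_k e_k}_{=:S_2}.
\]
Both index sets lie in $A^c$, so on them $a_k=a_k(r)$; thus $S_1$ and $S_2$ are coordinate restrictions of the residual $r$, and everything will be compared against $\|r\|=\sigma_N(x)$. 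Since $\Lambda$ collects the largest coefficients, every coefficient occurring in $S_1$ or $S_2$ is dominated by those on $\Lambda$, a domination I will use repeatedly.

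Before estimating I would assemble the quasi-greedy toolbox from the characterization \eqref{1.4}: unconditionality for constant coefficients (SUCC), in the form $\tfrac{1}{2K}\|\sum_\Gamma e_k\|\le\|\sum_\Gamma\varepsilon_k e_k\|\le 2K\|\sum_\Gamma e_k\|$; and boundedness by $K$ of the threshold (greedy) projections $P_{\{|a_k|>t\}}$ and of their complements, since these are greedy sums. From these I extract two matched estimates phrased through the decreasing rearrangement $a_1^*\ge a_2^*\ge\cdots$ of a block of coefficients: an upper \emph{Abel} estimate $\big\|\sum_{k\in\Gamma}a_k e_k\big\|\le 2K\sum_{j\ge1}(a_j^*-a_{j+1}^*)\,h_r(j)$, obtained by summation by parts against the partial sums and SUCC; and a per-rank lower estimate $a_j^*\,h_l(j)\lesssim K^3\sigma_N$, valid whenever the top-$j$ coordinates sit inside $A^c$, obtained by applying a threshold projection to $r$ together with the lower half of SUCC.

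With these in hand, $S_1$ is the clean main term. Applying the Abel estimate to $S_1$ and the lower estimate to the \emph{matched} coordinates $\Lambda\setminus A$ — which also lie in $A^c$, have the same cardinality $D=|A\setminus\Lambda|$, and whose coefficients dominate those of $S_1$ — a summation by parts converts $\sum_j(b_j^*-b_{j+1}^*)h_r(j)$, with $b_j^*\lesssim K^3\sigma_N/h_l(j)$, into $\lesssim K^4\sigma_N\sum_{j=1}^{D}\frac{h_r(j)-h_r(j-1)}{h_l(j)}$. The final, purely $K$-independent step uses that $h_r$ and $h_l$ are nondecreasing and subadditive — so that $h_r(j)/j$ is essentially nonincreasing and $\mu$ increases by $O(1)$ at each step — to bound this sum by $\lesssim\sum_{k=1}^N\mu(k)/k$. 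This is exactly where the harmonic weights $1/k$ are produced and where the crude factor $\mu(N)$ is refined to the sharp average $\sum_k\mu(k)/k$.

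The genuine difficulty is $S_2$. In the \emph{unconditional} case this term is free: unconditionality bounds the coordinate projection $P_{(A\cup\Lambda)^c}r$ by a constant times $\|r\|=\sigma_N$. For a merely quasi-greedy basis no such projection bound is available, and replacing it is the heart of the argument. The plan is to rank the support of $S_2$ by coefficient size and cut at the $N$-th largest coefficient: the top part is handled by exactly the Abel/lower-estimate machinery used for $S_1$, again yielding $\lesssim K^4\big(\sum_{k\le N}\mu(k)/k\big)\sigma_N$, while the remaining small-coefficient tail is absorbed by the quasi-greedy tail bound $\|r-G_M(r)\|\le K\sigma_N\le K\big(\sum_{k\le N}\mu(k)/k\big)\sigma_N$, using $\mu\ge1$. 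I expect the main obstacle to be controlling the partial, non-greedy projection defining $S_2$ cleanly through this split, since it is precisely the step substituting for unconditionality. Finally I would add the $S_1$ and $S_2$ bounds, collect the constants (a factor $2K$ from SUCC, $K^3$ from the threshold lower estimate, together with the tail contribution), and verify that they assemble to $8K^5$, the universal constant from the democracy-function summation being swallowed by $\lesssim$.
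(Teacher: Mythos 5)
Your architecture is the same as the paper's: the same splitting of $x-G_N(x)$ relative to the greedy set $\Lambda$ and the support $A$ of a near-best approximation, and your Abel upper estimate plus per-rank lower estimate are exactly the paper's Lemma~\ref{lema2.1} and Lemma~\ref{lema2.2} (derived from the same inequalities (\ref{2.1}) of Dilworth--Kalton--Kutzarova--Temlyakov), so your treatment of $S_1$ is essentially the paper's treatment of $S_{P\setminus\Gamma}(x)$ and is sound. The genuine gap is in the tail term $S_2$. Having cut the block $(A\cup\Lambda)^c$ at its $N$-th largest coefficient, you claim the remaining piece $S_E(r)$, $E=(A\cup\Lambda)^c\setminus\Gamma'$, ``is absorbed by the quasi-greedy tail bound $\|r-G_M(r)\|\le K\|r\|$.'' But $S_E(r)=r-G_M(r)$ only if $E^c=A\cup\Lambda\cup\Gamma'$ is a greedy set of $r$, i.e.\ only if every coefficient of $r$ on that set dominates every coefficient of $r$ on $E$. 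The coefficients of $r=x-y$ on $A$ are $a_k(x)-y_k$, and a near-minimizer $y$ of $\sigma_N$ is \emph{not} a coordinate projection: those coefficients can perfectly well be smaller than coefficients of $r$ on $E$, in which case no greedy truncation of $r$ is supported on $E$, and the quasi-greedy bound does not apply. Passing from $r-G_M(r)$ to $S_E(r)$ would then require discarding the coordinates of $A$ missed by the greedy set, i.e.\ an unrestricted coordinate projection --- precisely the operation that is unbounded for quasi-greedy bases and that the whole argument must avoid. (If you force $y=S_A(x)$, so that $r$ vanishes on $A$, your argument does go through, but then $\|r\|$ is the expansional error $\tilde\sigma_N(x)$ rather than $\sigma_N(x)$; that this distinction is the crux is exactly the content of remark 3.4 and Question 2 in Section~\ref{Section3}.)

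The repair stays inside your toolbox but follows the paper's route: do not cut the block at rank $N$ at all. First upgrade your per-rank lower estimate to the unrestricted form of Lemma~\ref{lema2.2}: for \emph{any} vector $z$ one has $a_j^*(z)\,h_l(j)\lesssim K^3\|z\|$, with no condition that the top coordinates lie in $A^c$, because you apply it to $z=r$ itself and use that the decreasing rearrangement of any coordinate restriction $S_\Delta(r)$ is dominated termwise by $a_j^*(r)$. Combined with your Abel upper estimate this gives the projection bound $\|S_\Delta(r)\|\lesssim K^4\big(\sum_{k\le|\Delta|}\mu(k)/k\big)\|r\|$ for every finite $\Delta$, which is the paper's Lemma~\ref{lema2.3}. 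Then write $S_2=S_{(A\cup\Lambda)^c}(r)=r-S_A(r)-S_{\Lambda\setminus A}(r)$ and apply this bound to the two finite sets $A$ and $\Lambda\setminus A$ (both of cardinality at most $N$); together with $\|r\|=\sigma_N(x)+\epsilon$ this gives $\|S_2\|\lesssim K^4\big(\sum_{k=1}^N\mu(k)/k\big)(\sigma_N(x)+\epsilon)$, and adding your $S_1$ estimate and letting $\epsilon\to 0$ completes the proof.
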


Before proving Theorem \ref{Th1} we make some remarks about the
function
$$
  v(N) := \sum_{k=1}^N \mu(k)\frac{1}{k}\,.
$$
Obviously, $\mu$ is increasing so that $v(N) \lesssim \mu(N) \log
N\,.$ In some cases this inequality is an equivalence. For example
if $\mu(N) \approx C$ (that is $\mathcal B$ is democratic) then
$v(N) \approx \log N\,.$ In other cases the inequality can be
improved. It can be proved that if $\mathcal B$ is quasi-greedy,
$\mu$ is doubling, that is there exists a constant $D \geq 1$ such
that $\mu(2k) \leq D\mu(k)\,, k \in \mathbb N$ (see the Appendix).
Under this condition it is not difficult to prove that
$$
  v(N) \approx \sum_{k=1}^{\log_2 N} \mu(2^k)\,.
$$
Moreover, if we assume that $\mu$ has a positive dilation index,
that is $\mu \in \mathbb W_{+}$ in the terminology of
\cite{GHN2011}, by Lemma 2.1 in \cite{GHN2011} we have
$$
  v(N) \lesssim \mu(2^{\log_2 N}) = \mu (N)\,,
$$
so that in this situation we do not need the $\log N$ factor.

We prove Theorem \ref{Th1} in Section 2. Section 3 contains some
comments and open questions.

\section{Proof of Theorem \ref{Th1}}\label{Section2}

 We need the following result from \cite{DKKT2003}: let $\mathcal B
= \{e_k\}_{k=1}^\infty$ be a quasi-greedy bases with constant $K$ in
a real Banach space. For any finite set $\Gamma \subset \mathbb N$
and any real numbers $\{a_k\}_{k\in \Gamma}$ we have

\begin{eqnarray}\label{2.1}
   \frac{1}{4K^2}(\min_\Gamma |a_k|)\Big\|\sum_{k\in \Gamma}
   e_k\Big\|_\mathbb B \leq \Big\|
   \sum_{k\in \Gamma} a_k\,e_k\Big\|_\mathbb B \leq (2K)(\max_\Gamma
   |a_k|) \Big\|\sum_{k\in \Gamma} e_k\Big\|_\mathbb B
\end{eqnarray}
(see Lemma 2.1 and Lemma 2.2 in \cite{DKKT2003}).

\begin{lemma}\label{lema2.1}
Let $\mathcal B$ and $\mathbb B$ as in Theorem \ref{Th1}. Suppose
that there exists $C_1 > 0$ such that for all $\Gamma \subset
\mathbb N$ finite
\begin{eqnarray}\label{2.2}
   \Big\| \sum_{k\in \Gamma} e_k\Big\|_\mathbb B \leq C_1 \eta
   (|\Gamma|)
\end{eqnarray}
for some $\eta$ increasing and doubling (for example, $\eta(N) =
h_r(N)$ and $C_1 = 1$). Then there exists $C = C_\eta$ such that for
any $x = \sum_{k\in \mathbb N} a_k(x) e_k \in \mathbb B$
\begin{eqnarray}\label{2.3}
   \|x\|_{\mathbb B} \leq 2K\,C_\eta  \sum_{k=1}^\infty a_k^*(x)
   \eta(k) \frac{1}{k}\,,
\end{eqnarray}
where $\{a_k^*(x)\}$ is a decreasing rearrangement of $\{|a_k(x)|\}$
as in (\ref{greedy}).
\end{lemma}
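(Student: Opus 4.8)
The plan is to rearrange the expansion of $x$ according to $\pi$, break it into dyadic blocks on which the coefficients are comparable, estimate each block by the upper bound in (\ref{2.1}) together with the hypothesis (\ref{2.2}), and finally convert the resulting dyadic sum into $\sum_k a_k^*(x)\eta(k)/k$ by exploiting that $\eta$ is increasing and doubling. First I would fix a bijection $\pi$ as in (\ref{greedy}), so that $|a_{\pi(k)}(x)| = a_k^*(x)$ is nonincreasing, and set, for $j=0,1,2,\dots$,
$$
  A_j = \{k \in \mathbb N : 2^j \le k < 2^{j+1}\}, \qquad
  P_j = \sum_{k \in A_j} a_{\pi(k)}(x)\, e_{\pi(k)}.
$$
Since $\sum_{i=0}^j P_i = G_{2^{j+1}-1}(x)$ and $\mathcal B$ is quasi-greedy, the partial sums converge to $x$, so $x = \sum_{j \ge 0} P_j$ in $\mathbb B$ and, provided the bound below is summable, $\|x\|_{\mathbb B} \le \sum_{j \ge 0}\|P_j\|_{\mathbb B}$.

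Next I would estimate a single block. On $A_j$ the largest coefficient modulus is $a_{2^j}^*(x)$ and $|A_j| = 2^j$, so the upper estimate in (\ref{2.1}) applied to $P_j$, followed by the hypothesis (\ref{2.2}), gives
$$
  \|P_j\|_{\mathbb B} \le 2K\, a_{2^j}^*(x)\,\Big\|\sum_{k \in A_j} e_{\pi(k)}\Big\|_{\mathbb B}
  \le 2K\, C_1\, a_{2^j}^*(x)\, \eta(2^j).
$$
Summing over $j$ then yields $\|x\|_{\mathbb B} \le 2K C_1 \sum_{j \ge 0} a_{2^j}^*(x)\,\eta(2^j)$.

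It remains to dominate this dyadic sum by $\sum_k a_k^*(x)\,\eta(k)/k$, and this is where the monotonicity and doubling of $\eta$ enter. I would compare the term of index $j \ge 1$ with the preceding block $A_{j-1}$: for $k \in A_{j-1}$ one has $a_k^*(x) \ge a_{2^j}^*(x)$, $\eta(k) \ge \eta(2^{j-1})$ and $1/k > 2^{-j}$, and $A_{j-1}$ contains $2^{j-1}$ indices, so
$$
  \sum_{k \in A_{j-1}} a_k^*(x)\,\frac{\eta(k)}{k} \ge \tfrac12\, a_{2^j}^*(x)\,\eta(2^{j-1}) \ge \frac{1}{2D}\, a_{2^j}^*(x)\,\eta(2^j),
$$
the last step using $\eta(2^j) \le D\,\eta(2^{j-1})$. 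Because the sets $A_{j-1}$, $j \ge 1$, tile $\mathbb N$, summing over $j \ge 1$ and bounding the leftover $j=0$ term $a_1^*(x)\eta(1)$ by the $k=1$ summand gives $\sum_{j \ge 0} a_{2^j}^*(x)\,\eta(2^j) \le (1+2D)\sum_k a_k^*(x)\,\eta(k)/k$, so that (\ref{2.3}) holds with $C_\eta = C_1(1+2D)$. The main obstacle is precisely this last conversion: without doubling the dyadic sum need not be controlled by $\sum_k a_k^*(x)\eta(k)/k$, so the doubling hypothesis is exactly what the argument consumes; the only other point to check carefully is the convergence legitimizing the termwise triangle inequality, which the quasi-greedy property supplies.
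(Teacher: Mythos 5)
Your proof is correct and follows essentially the same route as the paper's: decompose the greedy rearrangement into dyadic blocks, bound each block via the right-hand inequality of (\ref{2.1}) together with hypothesis (\ref{2.2}), and then convert the resulting sum $\sum_j a_{2^j}^*(x)\,\eta(2^j)$ into $\sum_k a_k^*(x)\,\eta(k)/k$ using that $\eta$ is increasing and doubling. In fact, your final conversion step (comparing the $j$-th dyadic term with the block $A_{j-1}$, yielding the constant $1+2D$) carefully fills in exactly the detail that the paper compresses into the single sentence ``Inequality (\ref{2.3}) follows since $\eta$ is doubling and increasing.''
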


\begin{proof}
Let $\pi$ be a bijection of $\mathbb N$ that gives $\{a_k^*(x)\}$,
that is $\{a_k^*(x)\}= |a_{\pi_{k}}(x)|\,.$ Since $\mathcal B$ is
quasi-greedy
$$
  \lim_{N\to \infty} \sum_{k=1}^N a_{\pi_{k}}(x) e_{\pi_{k}} \to
  x \,(\, \mbox{convergence in}\  \mathbb B)\,.
$$
Thus
\begin{eqnarray*}
    \|x\|_{\mathbb B} &=&
    \Big\|\sum_{k=1}^{\infty}a_{\pi(k)}(x)\,e_{\pi(k)}\Big\|_{\mathbb{B}}
    =\Big\|\sum_{j=0}^{\infty}\sum_{2^{j}\leq k <2^{j+1}}
    a_{\pi(k)}(x)\,e_{\pi(k)}\Big\|_{\mathbb{B}}\,\\
    &\leq &\sum_{j=0}^{\infty} \Big\| \sum_{2^{j}\leq k <2^{j+1}}
    a_{\pi(k)}(x)\,e_{\pi(k)}\Big\|_{\mathbb{B}}\,.
\end{eqnarray*}
We now use first the right hand side inequality of (\ref{2.1}) and
then condition (\ref{2.2}) to deduce
\begin{eqnarray*}
    \|x\|_{\mathbb B} &=&
    \sum_{j=0}^{\infty} (2K) |a_{\pi(2^j)}(x)|\Big\| \sum_{2^{j}\leq k <2^{j+1}}
    e_{\pi(k)}\Big\|_{\mathbb{B}}
    \leq (2K) C_1\sum_{j=0}^{\infty} |a_{\pi(2^j)}(x)| \eta(2^j)\\
    &= & (2K) C_1\sum_{j=0}^{\infty} a_{2^j}^* \eta(2^j) \,.
\end{eqnarray*}
Inequality (\ref{2.3}) follows since $\eta$ is doubling and
increasing.
\end{proof}

\begin{lemma}\label{lema2.2}
Let $\mathcal B$ and $\mathbb B$ as in Theorem \ref{Th1}. Suppose
that there exists $C_2 > 0$ such that for all $\Gamma \subset
\mathbb N$ finite
\begin{eqnarray}\label{2.4}
   \frac{1}{C_2} \eta(|\Gamma|) \leq\Big\| \sum_{k\in \Gamma} e_k\Big\|_\mathbb B
\end{eqnarray}
for some function $\eta$ (for example, $\eta(N) = h_l(N)$ and $C_2 =
1$). Then for any $x = \sum_{k\in \mathbb N} a_k(x) e_k \in \mathbb
B$
\begin{eqnarray}\label{2.5}
   [\sup_\Gamma a_k^*(x) \eta(k)] \leq C_2 (4K^3) \|x\|_{\mathbb
   B}\,,
\end{eqnarray}
where $\{a_k^*(x)\}$ is a decreasing rearrangement of $\{|a_k(x)|\}$
as in (\ref{greedy}).
\end{lemma}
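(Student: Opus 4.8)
The plan is to prove the pointwise estimate $a_k^*(x)\,\eta(k) \leq C_2(4K^3)\|x\|_{\mathbb{B}}$ for each fixed index $k$ and then take the supremum. The whole argument is a short chain built from the left-hand inequality of (\ref{2.1}), the quasi-greedy bound, and the hypothesis (\ref{2.4}). First I would let $\pi$ be a bijection of $\mathbb{N}$ realizing the decreasing rearrangement, so that $a_j^*(x) = |a_{\pi(j)}(x)|$, and set $\Gamma = \{\pi(1),\pi(2),\dots,\pi(k)\}$. By construction $\Gamma$ collects the indices of the $k$ largest coefficients of $x$, so $\sum_{j\in\Gamma} a_j(x)\,e_j = G_k(x)$, and the smallest modulus over $\Gamma$ is precisely the $k$-th largest, namely
$$
  \min_{j\in\Gamma}|a_j(x)| = a_k^*(x).
$$

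Next I would apply the left-hand inequality in (\ref{2.1}) to the coefficients $\{a_j(x)\}_{j\in\Gamma}$, which gives
$$
  \frac{1}{4K^2}\,a_k^*(x)\,\Big\|\sum_{j\in\Gamma} e_j\Big\|_{\mathbb{B}}
  \leq \Big\|\sum_{j\in\Gamma} a_j(x)\,e_j\Big\|_{\mathbb{B}}
  = \|G_k(x)\|_{\mathbb{B}}.
$$
The quasi-greedy property bounds the right side by $\|G_k(x)\|_{\mathbb{B}} \leq K\|x\|_{\mathbb{B}}$, while hypothesis (\ref{2.4}), applied to the set $\Gamma$ of cardinality $k$, bounds the norm $\|\sum_{j\in\Gamma} e_j\|_{\mathbb{B}}$ below by $\frac{1}{C_2}\eta(k)$.

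Combining these three facts yields
$$
  \frac{1}{4K^2 C_2}\,a_k^*(x)\,\eta(k)
  \leq \frac{1}{4K^2}\,a_k^*(x)\,\Big\|\sum_{j\in\Gamma} e_j\Big\|_{\mathbb{B}}
  \leq \|G_k(x)\|_{\mathbb{B}}
  \leq K\|x\|_{\mathbb{B}},
$$
which rearranges to $a_k^*(x)\,\eta(k) \leq C_2(4K^3)\|x\|_{\mathbb{B}}$. Since $k$ was arbitrary, taking the supremum over $k$ gives (\ref{2.5}).

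There is no genuine obstacle here; the argument is a direct assembly of the tools already in hand. The only point requiring care is the identity $\min_{j\in\Gamma}|a_j(x)| = a_k^*(x)$: it is exactly the choice of $\Gamma$ as the greedy support that forces the left-hand inequality in (\ref{2.1}) to produce the factor $a_k^*(x)$ rather than a smaller coefficient, and it is the quasi-greedy bound applied to that same $G_k(x)$ that lets the norm of the projection be controlled by $\|x\|_{\mathbb{B}}$. In this sense Lemma \ref{lema2.2} is the natural dual counterpart to Lemma \ref{lema2.1}, trading the upper democracy estimate for the lower one and the right-hand side of (\ref{2.1}) for its left-hand side.
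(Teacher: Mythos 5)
Your proof is correct and follows essentially the same route as the paper's: fix $k$, take $\Gamma=\{\pi(1),\dots,\pi(k)\}$ so that $\min_{j\in\Gamma}|a_j(x)|=a_k^*(x)$ and $S_\Gamma(x)=G_k(x)$, then chain together the hypothesis (\ref{2.4}), the left-hand inequality of (\ref{2.1}), and the quasi-greedy bound (\ref{1.4}) to get $a_k^*(x)\,\eta(k)\leq C_2(4K^3)\|x\|_{\mathbb B}$, and finally take the supremum over $k$. The only difference is cosmetic (the order in which the three ingredients are invoked), so there is nothing to add.
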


\begin{proof}
Let $\pi$ be as in the proof of Lemma \ref{lema2.1}. For any $k\in
\mathbb N$ we use condition (\ref{2.4}) and then the left hand side
inequality of (\ref{2.1}) to obtain
\begin{eqnarray*}
   |a_{\pi(k)}(x)|\eta(k) \leq C_2 |a_{\pi(k)}(x)|
   \Big\|\sum_{j=1}^k e_{\pi(j)}\Big\| \leq C_2 (4K^2)
   \Big\|\sum_{j=1}^k a_{\pi(j)} e_{\pi(j)}\Big\|\,.
\end{eqnarray*}
We use (\ref{1.4}) to deduce $|a_{\pi(k)}(x)| \eta(k) \leq C_2
(4K^3) \|x\|_{\mathbb B}\,.$ The result follows by taking the
supremum on $k\in \Gamma$.
\end{proof}

For $\Gamma \subset \mathbb N$ and $x=\sum_{k\in \mathbb N} a_k(x)
e_k \in \mathbb B$ define the {\bf projection operator over
$\Gamma$} as
$$
   S_\Gamma (x) := \sum_{k\in\Gamma} a_k(x) e_k \,.
$$

\begin{lemma}\label{lema2.3}
Let $\mathcal B$ and $\mathbb B$ as in Theorem \ref{Th1}. For
$\Gamma \subset \mathbb N$ finite
\begin{eqnarray*}
   \|S_\Gamma (x)\|_{\mathbb B} \lesssim (8K^4)\Big( \sum_{k=1}^{|\Gamma|} \mu(k)
   \frac{1}{k}\Big) \|x\|_\mathbb B\,.
\end{eqnarray*}
\end{lemma}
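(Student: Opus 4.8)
The plan is to estimate $\|S_\Gamma(x)\|_{\mathbb B}$ by feeding the projected vector $S_\Gamma(x)$ into Lemma \ref{lema2.1} and then controlling the resulting coefficient sum through Lemma \ref{lema2.2}. First I would apply Lemma \ref{lema2.1} to $S_\Gamma(x)$ with the choice $\eta = h_r$ and $C_1 = 1$; this is legitimate because $\|\sum_{k\in\Gamma} e_k\|_{\mathbb B} \le h_r(|\Gamma|)$ holds by definition, and $h_r$ is increasing and (for a quasi-greedy basis) doubling. This yields
\begin{eqnarray*}
   \|S_\Gamma(x)\|_{\mathbb B} \le 2K\,C_{h_r} \sum_{k=1}^{\infty} a_k^*(S_\Gamma(x))\, h_r(k)\,\frac{1}{k}\,.
\end{eqnarray*}
Since $S_\Gamma(x)$ has at most $|\Gamma|$ nonzero coefficients, its decreasing rearrangement $a_k^*(S_\Gamma(x))$ vanishes for $k > |\Gamma|$, so the sum truncates at $k = |\Gamma|$.

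Next I would pass from the projected data to the original data. The nonzero coefficients of $S_\Gamma(x)$ form a sub-collection of those of $x$, so the $k$-th largest absolute value among the former is at most the $k$-th largest among the latter; hence $a_k^*(S_\Gamma(x)) \le a_k^*(x)$ for every $k$, and the truncated sum does not increase when we make this replacement. I would then apply Lemma \ref{lema2.2} with $\eta = h_l$ and $C_2 = 1$ to obtain the pointwise bound $a_k^*(x)\, h_l(k) \le (4K^3)\|x\|_{\mathbb B}$, i.e. $a_k^*(x) \le (4K^3)\|x\|_{\mathbb B}/h_l(k)$.

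Substituting this estimate into the truncated sum and using $h_r(k)/h_l(k) \le \mu(k)$, which is immediate from the definition (\ref{1.8}) of $\mu$, I would arrive at
\begin{eqnarray*}
   \|S_\Gamma(x)\|_{\mathbb B} \le 2K\,C_{h_r}\,(4K^3)\,\|x\|_{\mathbb B}\sum_{k=1}^{|\Gamma|}\frac{h_r(k)}{h_l(k)}\frac{1}{k} \le 8K^4\, C_{h_r}\,\Big(\sum_{k=1}^{|\Gamma|}\mu(k)\frac{1}{k}\Big)\|x\|_{\mathbb B}\,,
\end{eqnarray*}
which is exactly the claimed inequality, the doubling constant $C_{h_r}$ being absorbed into the $\lesssim$ symbol.

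The main obstacle is the very first step: to invoke Lemma \ref{lema2.1} with $\eta = h_r$ I must know that $h_r$ is doubling, and this is precisely the place where the quasi-greedy hypothesis is genuinely used (the doubling of the democracy functions for quasi-greedy bases, in the same spirit as the doubling of $\mu$ discussed in the Appendix). Everything else — the truncation of the sum, the termwise domination of decreasing rearrangements under restriction to a subset, and the bookkeeping of the powers of $K$ — is routine.
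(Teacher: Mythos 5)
Your proof is correct and is essentially the paper's own argument: apply Lemma \ref{lema2.1} with $\eta = h_r$ to $S_\Gamma(x)$, truncate and dominate the rearranged coefficients via $a_k^*(S_\Gamma(x)) \le a_k^*(x)$, insert $h_r(k)/h_l(k) \le \mu(k)$, and finish with Lemma \ref{lema2.2} for $\eta = h_l$, yielding the same constant $(2K)(4K^3) = 8K^4$. One small correction to your closing remark: the doubling of $h_r$ (with constant $2$) follows from the triangle inequality alone and needs no quasi-greediness, as the paper notes in its Appendix; what does require the quasi-greedy hypothesis is the (quasi-)monotonicity of $h_r$ and, of course, the validity of Lemmas \ref{lema2.1} and \ref{lema2.2} themselves.
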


\begin{proof}
Apply Lemma \ref{lema2.1} with $\eta(N) = h_r(N)$ to obtain
\begin{eqnarray*}
    \|S_\Gamma(x)\|_{\mathbb B} &=&
    \Big\|\sum_{k\in \Gamma} a_{k}(x)\,e_{k}\Big\|_{\mathbb{B}}
    \lesssim (2K) \sum_{k=1}^{|\Gamma|} a_{k}^*(x)\,h_r(k)\frac{1}{k}\,\\
    &\leq &  (2K) \sum_{k=1}^{|\Gamma|} a_{k}^*(x)\frac{h_r(k)}{h_l(k)}
    \,h_l(k)\frac{1}{k}\,\\ &\leq & (2K) [\sup_k a_k^*(x) h_l(k)]
    \sum_{k=1}^{|\Gamma|}\mu(k)\frac{1}{k}\,.
\end{eqnarray*}
Use now Lemma (\ref{2.2}) with $\eta(k) = h_l(k)$ to deduce the
result.
\end{proof}

\vspace{.3cm}

We now prove Theorem \ref{Th1}. The proof follows arguments used in
\cite{KT2004}, \cite{TYY2011a} and \cite{TYY2011b} that were
presented by V. N. Temlyakov at the \textit{Concentration week on
greedy algorithms in Banach spaces and compressed sensing} held on
July 18-22 at Texas A\&M University.

Take $\epsilon >0$ and $N=1,2,3, \dots.$ Choose $p_N(x) = \sum_{k\in
P} b_k\, e_k$ with $|P|=N$ such that
\begin{eqnarray}\label{2.8}
    \|x - p_N(x)\|_{\mathbb B} \leq \sigma_N(x) + \epsilon\,.
\end{eqnarray}
Let $\Gamma$ be the set of indices picked by the thresholding greedy
algorithm after $N$ iterations, that is
\begin{eqnarray*}
    G_N(x) = \sum_{k\in \Gamma} a_k(x)\, e_k \,, \quad |\Gamma|=N\,.
\end{eqnarray*}
We have, from Lemma \ref{lema2.3} and (\ref{2.8})
\begin{eqnarray}
    & &\|x- G_N(x)\|_{\mathbb B} \leq  \|x - p_N(x)\|_{\mathbb
    B} + \|p_N(x) - S_P(x)\|_{\mathbb B} + \|S_P(x) - S_\Gamma (x)\|_{\mathbb B}
    \,\nonumber \\
    &= & \|x - p_N(x)\|_{\mathbb B} + \|S_P(x-p_N(x))\|_{\mathbb B} +
    \|S_P(x) - S_\Gamma (x)\|_{\mathbb B}
    \,\nonumber \\
    &\lesssim & \Big[1+8K^4 \Big(\sum_{k=1}^{|\Gamma|}
    \mu(k)\frac{1}{k}\Big)\Big] \|x - p_N(x)\|_{\mathbb B} +
    \|S_{P\setminus \Gamma}(x) - S_{\Gamma \setminus P} (x)\|_{\mathbb B}
    \,\nonumber \\
    &\leq & \Big[1+8K^4 \Big(\sum_{k=1}^{|\Gamma|}
    \mu(k)\frac{1}{k}\Big)\Big] (\sigma_N(x) + \epsilon) +
    \|S_{P\setminus \Gamma}(x)\|_{\mathbb B} + \|S_{\Gamma \setminus P} (x)\|_{\mathbb
    B}\,. \label{2.10}
\end{eqnarray}
It is not difficult to find an upper bound for $\|S_{\Gamma
\setminus P}(x)\|_{\mathbb B}\,.$ Since $p_N(x)$ is supported in $P$
we have $S_{\Gamma \setminus P}(p_N(x)) = 0$. By Lemma \ref{lema2.3}
\begin{eqnarray}
    \|S_{\Gamma \setminus P}(x)\|_{\mathbb B} &=&
    \|S_{\Gamma \setminus P}(x-p_N(x))\|_{\mathbb B} \lesssim
    (8K^4) \Big(\sum_{k=1}^{|\Gamma\setminus P|} \mu(k)\frac{1}{k}\Big)
    \|x-p_N(x)\|_{\mathbb B}   \,\nonumber \\
    &\leq & (8K^4) \Big(\sum_{k=1}^{N} \mu(k)\frac{1}{k}\Big)
    [\sigma_N(x) + \epsilon]\,. \label{2.11}
\end{eqnarray}
The bound for $\|S_{P \setminus \Gamma}(x)\|_{\mathbb B}$ is more
delicate. Use Lemma \ref{lema2.1} with $\eta(N) = h_r(N)$ to write
$$
   \|S_{P \setminus \Gamma}(x)\|_{\mathbb B} =
   \Big\|\sum_{k\in P \setminus \Gamma} a_k(x) \, e_k \Big\|_{\mathbb B} \leq
   (2K) \sum_{k=1}^{|P \setminus \Gamma|} a_k^*(S_{P \setminus
   \Gamma}(x)) h_r(k)\frac{1}{k}\,.
$$
If $k\in \Gamma\setminus P$ and $s\in P \setminus \Gamma$ we have
$a_s^*(S_{P \setminus \Gamma}(x))\leq a_k^*(S_{\Gamma \setminus
P}(x))$ by construction of the thresholding greedy algorithm since
$\min_\Gamma |a_k(x)| \geq \max_{\mathbb N \setminus \Gamma}
|a_k(x)|\,.$ Also, since $|P|=N=|\Gamma|$ we have $|P\setminus
\Gamma| = |\Gamma \setminus P|\,.$ Thus
$$
   \|S_{P \setminus \Gamma}(x)\|_{\mathbb B} \leq
   (2K) \sum_{k=1}^{|\Gamma \setminus P|} a_k^*(S_{\Gamma \setminus
   P}(x)) h_r(k)\frac{1}{k}\,.
$$
We use that $S_{\Gamma \setminus P}(p_N(x)) = 0$ and Lemma
\ref{lema2.2} with $\eta = h_l$ to obtain
\begin{eqnarray*}
    \|S_{P \setminus \Gamma}(x)\|_{\mathbb B} &\lesssim &
    (2K) \sum_{k=1}^{|\Gamma \setminus P|}
    a_k^*(S_{\Gamma \setminus P}(x-p_N(x))) h_r(k)\frac{1}{k} \\
    &= & (2K) \sum_{k=1}^{|\Gamma \setminus P|}
    a_k^*(G_{|\Gamma \setminus P|}(x-p_N(x)))\frac{h_r(k)}{
    h_l(k)} h_l(k)\frac{1}{k}  \\
    &\leq & (2K) [\sup_k a_k^*(G_{|\Gamma \setminus P|}(x-p_N(x)))h_l(k)]
    \Big(\sum_{k=1}^N \mu(k)\frac{1}{k} \Big)\\
    &\leq & (2K)(4K^3) \| G_{|\Gamma \setminus P|}(x-p_N(x))\|_{\mathbb B}
    \Big(\sum_{k=1}^N \mu(k)\frac{1}{k} \Big) \,.
\end{eqnarray*}
We use now that $\mathcal B$ is a quasi-greedy basis to write
\begin{eqnarray}
    \|S_{P \setminus \Gamma}(x)\|_{\mathbb B} &\lesssim &
    (8K^5) \| x-p_N(x)\|_{\mathbb B}
    \Big(\sum_{k=1}^N \mu(k)\frac{1}{k} \Big) \nonumber \\
    &\leq & (8K^5)  (\sigma_N(x)+\epsilon)
    \Big(\sum_{k=1}^N \mu(k)\frac{1}{k} \Big)\ \,. \label{2.12}
\end{eqnarray}
Replacing (\ref{2.11}) and (\ref{2.12}) in (\ref{2.10}), and letting
$\epsilon \to 0$ we obtain the result stated in Theorem \ref{Th1}.

\section{Comments and questions}\label{Section3}

{\bf 3.1}. Let $\mathcal B$ be a seminormalize quasi-greedy basis in
a real Hilbert space $\mathbb H.$ Since $\mathcal B =
\{e_k\}_{k=1}^\infty$ is unconditional for constant coefficients
(see Proposition 2 in \cite{Woj2000}) it follows from Kintchine's
inequality that
$$
    \Big\| \sum_{k\in\Gamma} e_k\Big\|_{\mathbb H} \approx
    \sqrt{|\Gamma|}\,.
$$
Thus, in this case we can take $\eta(N)=N^{1/2}$ in Lemma
\ref{lema2.1} and Lemma \ref{lema2.2}, giving us Theorem 3 from
\cite{Woj2000}.

\vspace{.3cm}

{\bf 3.2}. Let $\mathcal B = \{e_k\}_{k=1}^\infty$ be a quasi-greedy
basis in $L^p(\mathbb T^d)\,.$ If $2 \leq p < \infty$ the space
$L^p(\mathbb T^d)$ has type 2 and cotype $p$. Thus
\begin{eqnarray} \label{3.1}
   C_p' |\Gamma|^{1/p} \leq \Big\| \sum_{\Gamma} e_k \Big\|
   \leq C_p |\Gamma|^{1/2}\,, \quad \Gamma \subset \mathbb N\,.
\end{eqnarray}
Taking $\eta(N) \thickapprox N^{1/2}$ in Lemma \ref{lema2.1} and
$\eta(N) \thickapprox N^{1/p}$ in Lemma \ref{lema2.2} we obtain
Theorem 11 form \cite{TYY2011a} for the case $2\leq p < \infty\,.$

The case $1<p \leq 2$ of Theorem 11 from \cite{TYY2011a} is obtained
by observing that for this range of $p$'s the space  $L^p(\mathbb
T^d)$ has type $p$ and cotype 2, so that
\begin{eqnarray} \label{3.2}
   C_p' |\Gamma|^{1/2} \leq \Big\| \sum_{\Gamma} e_k \Big\|
   \leq C_p |\Gamma|^{1/p}\,, \quad \Gamma \subset \mathbb N\,.
\end{eqnarray}

\vspace{.3cm}

{\bf 3.3}. The proofs of Lemmata \ref{lema2.1} and \ref{lema2.2}
follow the pattern of the proofs of 1. $\Rightarrow$ 2. in Theorem
3.1 and Theorem 4.2 from \cite{GHN2011} for the limiting case
$"\alpha = 0"\,.$

\vspace{.3cm}

{\bf 3.4}. As in \cite{Woj2000} write, for $N=1,2,3, \dots $
$$
   e_N (\mathbb B) = e_N := \sup_{x\in \mathbb B}
   \frac{\|x-G_n(x)\|_{\mathbb B}}{\sigma_N(x)}\,, \quad (\frac{0}{0}
   = 1)\,.
$$
Theorem \ref{Th1} shows that for a quasi-greedy basis in a real
Banach space
$$
   e_N \leq C \Big(\sum_{k=1}^N \mu(k)\frac{1}{k} \Big) \lesssim
   \mu(N) \log N\,, \quad N\in \mathbb N\,.
$$
For unconditional bases, Theorem 4 from \cite{Woj2000} shows that
\begin{eqnarray} \label{3.5}
  e_N \approx \mu(N)\,, \quad N\in \mathbb N\,.
\end{eqnarray}
The same argument that proves (\ref{3.5}) can be used to prove the
following result: for a quasi-greedy basis $\mathcal B$ in a real
Banach space $\mathbb B$
\begin{eqnarray} \label{3.6}
  \tilde {e}_N \approx \mu(N)\,, \quad N\in \mathbb N\,.
\end{eqnarray}
were
$$
   \tilde{e}_N (\mathbb B) = \tilde{e}_N := \sup_{x\in \mathbb B}
   \frac{\|x-G_n(x)\|_{\mathbb B}}{\tilde \sigma_N(x)}\,, \quad (\frac{0}{0}
   = 1)\,.
$$
and
$$
  \tilde \sigma_N(x) = \tilde \sigma_N(x;\mathcal B, \mathbb B) :=
  \inf \{\|x- \sum_{k\in \Gamma} a_k(x)\,e_k\|_{\mathbb B}\, :
  |\Gamma|\leq N\}
$$
is the  \textit{expansional} best approximation to $x = \sum_{k\in
\mathbb N} a_k(x)\,e_k \in \mathbb B\,.$

Since $\sigma_N(x) \leq \tilde \sigma_N (x)$, for a quasi-greedy
basis we have by (\ref{3.6}) and Theorem \ref{Th1}
\begin{equation} \label{3.7}
   \mu(N) \lesssim \tilde e_N (\mathcal B) \leq e_N (\mathcal B) \lesssim
   \mu(N) \log N\,.
\end{equation}
By the comments that follow the statement of Theorem \ref{Th1} if
$\mu$ has positive dilation index, $\mu(N) \lesssim \tilde e_N
(\mathcal B) \leq e_N (\mathcal B) \lesssim    \mu(N) \,.$ The last
inequality in (\ref{3.7}) was proved in \cite{Woj2000} for the
Hilbert space case (see the Remark that follows the proof of Theorem
5 in \cite{Woj2000}).

\vspace{.3cm}

{\sc Question 1.} Is the inequality on the right hand side of
(\ref{3.7}) sharp? That is, is it possible to find a quasi-greedy
basis $\mathcal B$ such that $e_N (\mathcal B) \approx
   \mu(N) \log N\,? $ This question appears in \cite{TYY2011a} for
the Hilbert space case (see paragraph that follows Theorem 10 in
\cite{TYY2011a}).

\vspace{.3cm}

{\sc Question 2.} Is it true that for a quasi-greedy basis $\tilde
\sigma_N (x) \lesssim \sigma_N(x) \log N\,?$ If the answer is "yes"
then by (\ref{3.6}) we will have $ e_N (\mathcal B)  \lesssim \tilde
e_N (\mathcal B) \log N \lesssim \mu(N) \log N\,,$ given another
proof of the right hand side of (\ref{3.7}).

\vspace{.3cm}

{\bf 3.5}. For a quasi-greedy basis $\mathcal B =
\{e_k\}_{k=1}^\infty$  in $L^p(\mathbb T^d)\,,$ inequalities
\ref{3.1} and \ref{3.2} (or type and cotype properties of
$L^p(\mathbb T^d)$) show that $\mu(N) \lesssim N^{|\frac{1}{p} -
\frac{1}{2}|}\,.$ By the comments that follow the statement of
Theorem \ref{Th1}, if $p\neq 2$ and $1 < p <\infty$, $e_N(\mathbb
B)\lesssim N^{|\frac{1}{p} - \frac{1}{2}|}\,,$ proving Theorem 1.1
from \cite{TYY2011b}. (Notice that $w(N):= N^{|\frac{1}{p} -
\frac{1}{2}|}\,$ has positive dilation index if $p\neq 2$.) For
$p=2$ we have $e_N(\mathcal B) \lesssim \log N$ by Theorem
\ref{Th1}.

Consider now the trigonometric system $\mathcal T^d = \{e^{ikx} :
k\in \mathbb Z^d\}$ in $L^p(\mathbb T^d)\,, 1 \leq p \leq \infty$
(here $L^\infty(\mathbb T^d)$ is $C(\mathbb T^d)$, the set of
continuous functions in $\mathbb T^d$). It is proved in
\cite{Tem1998} (Theorem 2.1) that
$$
   e_N(\mathcal T^d,L^p(\mathbb T^d)) \lesssim N^{|\frac{1}{p} -
   \frac{1}{2}|}\,, \quad 1\leq p \leq \infty\,.
$$

\vspace{.3cm}

{\sc Question 3.} (Asked by V. N. Temlyakov at the
\textit{Concentration week on greedy algorithms in Banach spaces and
compressed sensing} held on July 18-22 at Texas A\&M University.)

a) Characterize those systems $\mathcal B$ in $L^p(\mathbb T^d)$,
$1\leq p \leq \infty\,,$ such that $e_N(\mathcal T^d,L^p(\mathbb
T^d)) \lesssim N^{|\frac{1}{p} - \frac{1}{2}|}\,,$ $N\in \mathbb
N\,.$ Notice that if $1 < p \neq 2  < \infty\,,$ the
characterization must be satisfy by $\mathcal T^d$ as well as any
quasi-greedy basis.

More generally,

b) Let $v(N)$ be an increasing function of $N$. Characterize those
systems $\mathcal B$ in a Banach space $\mathbb B$ for which $
e_N(\mathcal B, \mathbb B) \lesssim v(N)\,.$

\section{Appendix}\label{Section4}

\begin{lemma}\label{lema4.1}
If $\mathcal B$ is a quasi-greedy basis in a Banach space $\mathbb
B$, the function $\mu$ defined in (\ref{1.8}) is doubling.
\end{lemma}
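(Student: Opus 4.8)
The plan is to prove the quantitative statement $\mu(2N)\leq 2K^2\,\mu(N)$ for every $N$, which gives the doubling property with $D=2K^2$. Since $\mu(2N)=\sup_{1\leq k\leq 2N}h_r(k)/h_l(k)$ and the terms with $k\leq N$ already contribute at most $\mu(N)$, the whole matter reduces to controlling the ratio $h_r(k)/h_l(k)$ in the range $N<k\leq 2N$. The single tool I would use to do this is a monotonicity estimate for the ``all-ones'' sums: for finite sets $\Gamma\subseteq\Gamma'$,
$$
  \Big\|\sum_{k\in\Gamma}e_k\Big\|_{\mathbb B}\leq K\Big\|\sum_{k\in\Gamma'}e_k\Big\|_{\mathbb B}.
$$
To see this, set $x=\sum_{k\in\Gamma'}e_k$, so that all nonzero coefficients equal $1$; choosing a greedy ordering $\pi$ satisfying \eqref{greedy} that lists the elements of $\Gamma$ before the remaining elements of $\Gamma'$ (the ties being broken arbitrarily), one has $G_{|\Gamma|}(x)=\sum_{k\in\Gamma}e_k$, and the quasi-greedy bound $\|G_{|\Gamma|}(x)\|_{\mathbb B}\leq K\|x\|_{\mathbb B}$ yields the claim.

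From this monotonicity estimate I would extract three elementary consequences about the democracy functions. First, $h_r$ is essentially nondecreasing: for $m\leq n$, taking a near-optimal $\Gamma$ for $h_r(m)$ and extending it to a set $\Gamma'$ of size $n$ gives $h_r(m)\leq K\,h_r(n)$. Second, by the same argument applied to subsets, $h_l$ is essentially nondecreasing: for $m\leq n$, taking a near-optimal $\Gamma'$ for $h_l(n)$ and restricting to a subset $\Gamma$ of size $m$ gives $h_l(m)\leq K\,h_l(n)$. Third, $h_r$ is subadditive by the triangle inequality—splitting a near-optimal set of size $2N$ into two halves of size $N$ gives $h_r(2N)\leq 2\,h_r(N)$.

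With these in hand the main estimate is immediate. For $N<k\leq 2N$, monotonicity of $h_r$ (since $k\leq 2N$) together with subadditivity gives $h_r(k)\leq K\,h_r(2N)\leq 2K\,h_r(N)$, while monotonicity of $h_l$ (since $N\leq k$) gives $h_l(N)\leq K\,h_l(k)$, i.e.\ $h_l(k)\geq K^{-1}h_l(N)$. Dividing,
$$
  \frac{h_r(k)}{h_l(k)}\leq \frac{2K\,h_r(N)}{K^{-1}h_l(N)}=2K^2\,\frac{h_r(N)}{h_l(N)}\leq 2K^2\,\mu(N),
$$
and combining with the trivial bound $h_r(k)/h_l(k)\leq\mu(N)$ for $k\leq N$ yields $\mu(2N)\leq 2K^2\mu(N)$. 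The only genuinely delicate point—and the sole place where quasi-greediness is used in full—is the monotonicity estimate above, specifically the justification that the tie-breaking in the greedy ordering lets $G_{|\Gamma|}$ select exactly the indices of $\Gamma$; everything after that is the triangle inequality and bookkeeping, so I expect no further obstacle.
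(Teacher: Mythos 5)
Your proposal is correct and follows essentially the same route as the paper: both rest on the quasi-greedy monotonicity estimate $\bigl\|\sum_{k\in\Gamma}e_k\bigr\|_{\mathbb B}\leq K\bigl\|\sum_{k\in\Gamma'}e_k\bigr\|_{\mathbb B}$ for $\Gamma\subseteq\Gamma'$ (the paper's (\ref{4.1}), cited there from \cite{Woj2000} and \cite{DKKT2003}, and which you reprove by the standard tie-breaking argument) together with the triangle-inequality doubling of $h_r$, and both arrive at the same doubling constant $2K^2$. The only difference is organizational: the paper fixes an extremal $k_0\leq 2N$ and argues by parity cases, whereas you handle the whole range $N<k\leq 2N$ uniformly through the essential monotonicity of $h_r$ and $h_l$.
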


\begin{proof}
It is proved in \cite{Woj2000} and \cite{DKKT2003} that for a
quasi-greedy basis $\mathbb B = \{e_k\}_{k=1}^\infty$ with
quasi-gredy constant $K$, if $B\subset A \subset \mathbb N$ (finite
sets) then
\begin{eqnarray} \label{4.1}
\Big\| \sum_{k\in B} e_k \Big\|_{\mathbb B} \leq K \Big\| \sum_{k\in
A} e_k \Big\|_{\mathbb B}\,.
\end{eqnarray}
We have to prove that $\mu(2N) \leq D \mu(N)$ for some $D$
independent of $N$. Since $\mu(2N)$ is defined as a supremum over
the finite set $1 \leq k \leq 2N\,,$ there exists $k_0 \leq 2N$ such
that $\mu(2N) = h_r(k_0)/h_l(k_0)\,.$ Notice that $h_r$ is doubling
with doubling constant 2 by the triangle inequality.

Suppose first that $k_0=2s \leq 2N$ is even. From (\ref{4.1}) we
deduce $h_l(s) \leq K h_l(2s)\,.$ Hence
$$
   \mu(2N) = \frac{h_r(2s)}{h_l (2s)} \leq (2K) \frac{h_r(s)}{h_l(s)}
   \leq (2K) \mu(N)
$$
since $s\leq N\,.$

Assume now that $k_0 = 2s+1$ is odd. Since $2s+1=k_0 \leq 2N$ we
deduce $s\leq N-\frac{1}{2}\,,$ and since $s$ is an integer $s\leq
N-1\,.$ From (\ref{4.1}) we deduce $h_r(2s+1) \leq K\,h_r(2s+2)$ and
$h_l(s+1) \leq K h_l(2s+1)\,.$ Hence
$$
   \mu(2N)= \frac{h_r(2s+1)}{h_l(2s+1)} \leq K^2\, \frac{h_r(2s+2)}{h_l(s+1)}
    \leq 2 K^2\, \frac{h_r(s+1)}{h_l(s+1)} \leq (2K^2)\mu(N)
$$
since $s+1 \leq N\,.$
\end{proof}

\vspace{.3cm}

{\bf Acknowledgements}. This work started when the author
participated in the \textit{Concentration week on greedy algorithms
in Banach spaces and compressed sensing} held on July 18-22 at Texas
A\&M University. I would like to express my gratitude to the
Organizing Committee for the invitation to participate in this
meeting.

\end{document}